\documentclass[12pt,leqno]{amsart}
\usepackage[all]{xy}
\usepackage{amssymb,mathrsfs,euscript,enumitem,marvosym}
\usepackage{mathrsfs,euscript,amssymb,amsmath,bbold}
\usepackage[hypertex]{hyperref}
\textheight9in  \def\DATE{August 10, 2016}%{\today}
\textwidth6.5in \hoffset-1.35cm
\voffset-1cm \markright{{\rm [\DATE]}} \hbadness=100000
\parskip3pt
\parindent10pt
\footnotesep2mm
\overfullrule3pt

%\swapnumbers
\newtheorem{theorem}{Theorem}%[subsection]

\newtheorem{lemma}[theorem]{Lemma} 
\newtheorem{proposition}[theorem]{Proposition}

\theoremstyle{definition}

\newtheorem{definition}[theorem]{Definition}

\def\rrule{{rule}}
\def\odotM{{\odot_{\rm M}}}
\def\odotS{{\odot_{\rm S}}}
\def\ttA{{\tt A}}\def\ttB{{\tt B}}
\def\CV{{\tt Cat}(\Vect)}
\def\op{operad}
\def\bbA{{\mathbb N}}
\def\dual{*}

\def\susp{\uparrow\!}
\def\desusp{\downarrow\!}

\makeatletter
\newcommand{\downar}{\@ifnextchar*{\starDownAr}{\nostarDownAr}}
\def\starDownAr*{
  {\raisebox{0.5pt}[0pt][0pt]{\ensuremath{\scriptstyle{\downarrow}}}}
} \def\nostarDownAr{
  {\raisebox{1pt}[0pt][0pt]{\ensuremath{\downarrow}}} } 
\makeatother
\makeatletter
\newcommand{\upar}{\@ifnextchar*{\starUpAr}{\nostarUpAr}}
\def\starUpAr*{ {\raisebox{1pt}[0pt][0pt]{\ensuremath{\scriptstyle{\uparrow}}}} }
\def\nostarUpAr{ {\raisebox{1pt}[0pt][0pt]{\ensuremath{\uparrow}}} }
\makeatother

\def\twxxi{\bullet}
\def\vrt{|}

\allowdisplaybreaks

\def\oM{{\EuScript O}}
\def\oM{\hbox{\Leo}}

%MM's macros:

\def\Vect{{\tt Vect}}
\def\Cor{{\tt fSet}}

\def\stt#1{{\{#1\}}}
\def\Rada#1#2#3{{#1_{#2},\ldots,#1_{#3}}}
\def\rada#1#2{{#1,\ldots,#2}}

\def\ext{{\mathbb S}}

\def\redukce#1{\vbox to .3em{\vss\hbox{#1}}}
 \def\({{\hbox{$(\!($}}}\def\){{\hbox{$)\!)$}}}
\def\End{{\mathcal E} \hskip -.1em{\it nd\/}}

\def\lra{\longrightarrow}

\def\vlra{{\hbox{$-\hskip-1mm-\hskip-2mm\longrightarrow$}}}

\def\bbZ{{\mathbb Z}}\def\Lin{{\underline {\tt Vect}}}

\def\ot{\otimes}\def\bfk{{\mathbb k}}

\newcommand{\twooo}[2]{\sideset{_{#1}}{_{#2}}{\mathop{\bullet}}}
\long\def\martin#1\endmartin{
\todo[inline]{#1}
}
\makeatletter
\providecommand\@dotsep{5}
\def\listtodoname{List of Todos}
\def\listoftodos{\@starttoc{tdo}\listtodoname}
\makeatother

\long\def\comment#1\endcomment{\relax}

\def\id{{\mathbb 1}}
\newcommand{\sid}{\hbox{\scriptsize $1 \hskip -.35em 1$}}

\def\({(} \def\){)} 
\def\bl{\big(} \def\br{\big)}

\pagestyle{myheadings}

\title{Odd structures are odd}

\author{Martin Markl}

\catcode`\@=11
\address{Mathematical Institute of the Academy, {\v Z}itn{\'a} 25,
         115 67 Prague 1, The Czech Republic}
\address{Faculty of Mathematics and Physics, Charles University,
186 75 Sokolovsk\'a 83, Prague~8, The Czech Republic}
\email{markl@math.cas.cz}

\def\@evenfoot{\rule{0pt}{20pt}[\DATE] \hfill [{\tt \jobname.tex}]}
\def\@oddfoot{\rule{0pt}{20pt}{[\tt \jobname.tex}]\hfill [\DATE]}

\catcode`\@=13

\thanks{The author was supported by the Eduard \v Cech Institute
  P201/12/G028 and RVO: 67985840.}

\begin{document}
\bibliographystyle{plain}

\begin{abstract}
By an odd structure we mean  an algebraic structure in the
category of graded vector spaces whose structure operations have odd
degrees. Particularly important are odd modular
{\op}s which appear as Feynman transforms of modular
{\op}s and, as such, describe some structures
of string field theory. 

We will explain how odd structures are affected by the choice of the monoidal
structure of the underlying category.   
We will then present two `natural' and `canonical' 
constructions of an odd modular endomorphism {\op} leading to  different
results, only one being correct.
This contradicts the generally accepted belief that the systematic use of the
Koszul sign \rrule\ leads to correct signs. 
\end{abstract}

\keywords{Graded vector space, monoidal structure,
odd endomorphism operad} 
\subjclass[2010]{18D50 (Primary), 18D20, 18D10 (Secondary)}

\maketitle

\tableofcontents

\section*{Introduction}
\renewcommand*{\thefootnote}{\fnsymbol{footnote}}
\setcounter{footnote}{1}

As noticed in the seminal paper~\cite{getzler-kapranov:CompM98}, the
category of modular {\op}s is not Koszul self-dual. Consequently, the bar
construction of a modular {\op} (called in this context the {\em Feynman
transform\/}) is not an ordinary modular {\op}, but an {\em
  odd\/} modular {\op}.\footnote{Terminology suggested by Ralph Kaufmann; a
  definition is recalled in Section~\ref{zitra_letim_do_Prahy}.}
It~was shown
in \cite{markl:la} and in the forthcoming work
\cite{doubek:_modul,muenster,munster-sachs} that some algebraic structures
relevant for string field theory are algebras over the Feynman
transform of a modular {\op}. This explains the interest in explicit
understanding odd modular operads and their algebras.

Odd modular operads live in the category $\Lin$ of graded
vector spaces and their linear homogeneous maps of arbitrary degrees.
This category is enriched over the category $\Vect$ of graded vector
spaces and linear maps of degree $0$ and admits two different yet
`natural' and `canonical' symmetric closed monoidal structures.

Structural 
operations of odd modular operads have degrees $+1$, so they are examples of an
`odd' structure having
operations of odd degrees. We use odd
associative algebras as a~simple example which shows 
that concrete `models' of such structures might
depend on the choice of a monoidal structure of $\Lin$. The same is
very crucially true also for odd 
modular operads, the main subject of this note. 
The `oddness' of operad- and PROP-like structures was discussed in
great detail in \cite{ten-vul-Kaufmann}; we refer to it for
other examples of odd structures.

The concept of algebras over odd modular
{\op}s requires odd endomorphism {\op}s; and algebra over an odd
modular operad $\oM$ is a morphism $\oM \to \End_V$ from $\oM$ to
the odd endomorphism operad $\End_V$. The structure of the odd
endomorphism operad is extremely sign-sensitive.
We will present {\it two} `natural' and `canonical' 
constructions of this {\op}, leading quite 
unexpectedly to {\it different\/} results. This
phenomenon is explained by the presence of {\it two}
different monoidal structures of $\Lin$. 
Proposition~\ref{pokusim_se_vratit_kufr} of the last section
specifies which one gives 
the correct result.

The moral is that even the systematic and careful use of the Koszul
sign \rrule\ might lead to wrong results if one is unlucky.  
We hope that this note would warn the reader that this may indeed happen 
if `odd' structures are present.

\vskip .3cm

\noindent 
{\bf Conventions.}
All algebraic objects will be considered over a fixed field $\bfk$ of
characteristic zero.  The symbol $\otimes$ will be reserved for the
tensor product over $\bfk$.  We will denote by $\id_X$ or simply by
$\id$ when $X$ is understood, the identity endomorphism of an object
$X$ (set, vector space, \&c.).

By a {\em grading\/} we mean a ${\bbZ}$-grading, though everything in
this note can easily be modified to the ${\bbZ}_2$-graded case.
The degree of a graded object will be denoted by $|w|$.  
We will  use the
Koszul sign \rrule\ meaning that whenever we commute two ``things''
of degrees $p$ and $q$, respectively, we multiply the sign by~$(-1)^{pq}$. 

We  assume basic
knowledge of operads with the emphasis on modular ones as it can
be gained for example from~\cite[Chapter
5]{markl-shnider-stasheff:book} 
complemented by the original
source \cite{getzler-kapranov:CompM98}. 

\vskip .15cm

\noindent 
{\bf Notation.}
For $n \geq 1$ we denote by $\Sigma_n$  the symmetric group of $n$
elements realized as the group of automorphism of the
set $\stt{\rada1n}$. 
For graded indeterminates $\Rada x1n$ and a permutation $\sigma\in
\Sigma_n$ we define the {\em Koszul sign\/}
$\epsilon(\sigma)=\epsilon(\sigma;\Rada x1n)$ by
\begin{equation}
\label{Koszul_sign}
x_1 \cdots  x_n = \epsilon(\sigma;x_1,\dots,x_n)
\cdot x_{\sigma(1)} \cdots  x_{\sigma(n)},
\end{equation}
which has to be satisfied in the free graded commutative associative algebra
$\ext(\Rada x1n)$ generated by $\Rada x1n$.

For graded vector spaces $V$ and $W$ we denote by $\Lin^k(V,W)$ the vector
space of degree $k$ morphisms $V \to W$ and  by $\Lin(V,W)$
the graded vector space
\[
\Lin(V,W) := \bigoplus_{k \in {\mathbb Z}} \Lin^k(V,W).
\] 
If $W$ is the ground field
$\bfk$, we obtain the graded dual $V^\dual :  = \Lin(V,\bfk)$ of
$V$.
Notice that the degree $k$ component of $V^\dual$ equals the standard
linear dual $(V^{-k})^\dual$  of the degree $-k$ component of $V$.
A degree $k$ morphism $f : V \to W$ defines a map \hbox{$f^\dual : W^\dual \to
V^\dual$} of the same degree by the formula
\begin{equation}
\label{dostane_Jarka_dovcu?}
f^\dual (x) := (-1)^{k |x|}\, x \circ f, \ x \in V^*.
\end{equation}
For $V=\bigoplus_p V_p$\,, let $\susp V$ 
be the suspension of $V$, i.e.\
the graded
vector space defined by $(\susp V)_p = V_{p-1}$.
One has the obvious linear isomorphisms $\uparrow\ : V \to\ \susp V$ and
$\downarrow\ :\ \susp V\to  V$ of degrees $+1$ and $-1$, respectively.
Given graded vector spaces $V_1$ and $V_2$, we denote by $\tau$ the symmetry
\begin{equation}
\label{flip}
\tau : V_1 \ot V_2 \to V_1 \ot V_2,\ \tau (v_1 \ot v_2) :=  
(-1)^{|v_1||v_2|} (v_2 \ot v_1).
\end{equation}

\vskip .3cm

\noindent 
{\bf Acknowledgment.}
Main ideas of this note were born during
conversations with Michael Batanin and Martin Doubek. Martin 
pointed to me the potential sign problems in the construction of the
odd endomorphism operad, and Michael substantially contributed to my
understanding of the categories of graded vector spaces. 
I owe my thanks to Bruno Vallette for explaining to me the
difference between the Koszul sign convention and the Koszul sign rule, and
to Ralph Kaufmann for turning my attention to \cite{ten-vul-Kaufmann}.
I enjoyed the wonderful
atmosphere of the Max-Planck Institut
f\"ur Matematik in Bonn 
in the period when this paper \hbox{was~completed}. I am also indebted
to an anonymous referee for many useful remarks and corrections.

\vskip .3cm

\noindent 
{\bf Plan of the paper.} In Section~\ref{zase_prsi}
we show that the $2$-category $\CV$ of categories enriched over the category
$\Vect$ of graded vector
spaces and their linear degree $0$-maps admits two monoidal
structures, $\odotS$ and $\odotM$, thus one has two types of pseudomonoids in
$\CV$. We notice that the category $\Lin$  of graded vector
spaces and their linear maps of {\em arbitrary\/} degrees with its
standard $\Vect$-enriched monoidal (tensor) structure is a
pseudomonoid for~$\odotS$, while it has yet another monoidal structure
which makes it a~pseudomonoid for $\odotM$.  Section~\ref{zase_prsi}
is complemented  with
a toy example of an odd structure in~$\Lin$.

In Section~\ref{Mikes_mi_popral_k_narozkam} we recall
odd modular operads and show how their concrete models depend on
the choice of a monoidal structure of $\Lin$. The last section is
devoted to two constructions of the odd endomorphism operad. We
investigate the
sensitivity of these constructions to the monoidal structure of $\Lin$.

\section{The category of graded vector spaces}
\label{zase_prsi}

Let  $\Vect$  denote the category of graded vector
spaces and their linear degree $0$-maps, and $\CV$ the $2$-category of
$\Vect$-enriched categories. The $2$-category $\CV$ bears the 
`standard' monoidal structure, denoted  $\odotS$, defined as
follows. For enriched categories $\ttA,\ttB \in \CV$,  the objects of the
category $\ttA \odotS \ttB$ are  couples $(a,b)$, where $a$ is an object of
$\ttA$ and $b$ an object of $\ttB$. The enriched hom-spaces are
\[
(\ttA \odotS \ttB)\big((a_1,b_1),(a_2,b_2)\big) 
:= \ttA(a_1,a_2) \ot \ttB(b_1,b_2),
\]
and the enriched composition $\circ_{\rm S}$ is given by the diagram
\def\otr{{{\ot}}}
\[
\xymatrix@C = -1em@R = 2em{(\ttA \odotS \ttB)\big((a_1,b_1),(a_2,b_2)\big) \otr (\ttA
  \odotS \ttB)\big((a_2,b_2),(a_3,b_3)\big) \ar[r]^-{\circ_{\rm S}}
\ar@{=}[dd]
&(\ttA \odotS \ttB)\big((a_1,b_1),(a_3,b_3)\big) 
\\
&\ttA(a_1,a_3) \otr \ttB(b_1,b_3) \ar@{=}[u]
\\
\hskip -5em
\ttA(a_1,a_2) \otr \ttB(b_1,b_2) \otr  \ttA(a_2,a_3) \otr \ttB(b_2,b_3) 
\ar[r]^-{(\id \ot \tau \ot \id)}&
\ar[u]^{\circ\, \ot \, \circ} \hphantom{.}
\ttA(a_1,a_2) \otr \ttA(a_2,a_3) \otr  \ttB(b_1,b_2) \otr
\ttB(b_2,b_3).
}
\]
In the above displays, $a_1,a_2, a_3$ are objects of $\ttA$, $b_1,b_2,
b_3$ objects of $\ttB$,  $\tau$ the symmetry~(\ref{flip}) and $\circ$
the compositions in $\ttA$ resp.~$\ttB$. 
The $\circ_S$-composition can also be defined directly by the~formula
\[
(f_1 \ot g_1)\circ_{\rm S} (f_2 \ot g_2) :=
(-1)^{|g_1||f_2|} (f_1\circ f_2 \ot g_1\circ g_2),
\]
where $f_1,f_2$ are composable morphisms of $\ttA$ and 
$g_1,g_2$ composable morphisms of $\ttB$. 

The $2$-category $\CV$ has however another 
monoidal structure which we denote by $\odotM$ and
call it, from the reasons
explained later, the {\em McGill\/} monoidal structure.
The objects of $\ttA \odotM \ttB$ are the same
as the objects of $\ttA \odotS \ttB$, and also the enriched hom-spaces
agree, i.e.\ 
\[
(\ttA \odotM \ttB)\big((a_1,b_1),(a_2,b_2)\big) 
:= \ttA(a_1,a_2) \ot \ttB(b_1,b_2),
\]
but the composition $\circ_{\rm M}$ is now given by 
\[
(f_1 \ot g_1)\circ_{\rm M} (f_2 \ot g_2) :=
(-1)^{|f_1||g_2|} (f_1\circ f_2 \ot g_1\circ g_2).
\]

The monoidal $2$-categories $\big(\CV,\odot_{\rm S}\big)$ and
$\big(\CV,\odot_{\rm M}\big)$ are isomorphic as mo\-noidal categories, via the
isomorphism $\pi$ given by the identity on $\CV$ and 
the natural family of~functors
\[
\big\{
\varpi_{\ttA,\ttB} :   \ttA \odotS \ttB \stackrel \cong
\longrightarrow  
\ttA \odotM \ttB, \ \ttA,\ttB \in \CV\big\}.
\]
The functor $\varpi_{\ttA,\ttB}$ 
is the identity  on objects, while on
a morphism 
\[
f \ot g \in \ttA(a_1,a_2) \ot \ttB(b_1,b_2) =
(\ttA \odotS \ttB)\big((a_1,b_1),(a_2,b_2)\big)
\]
it acts as
\[
\varpi_{\ttA,\ttB}(f \ot g) : = (-1)^{|f||g|} (f \ot g)
 \in \ttA(a_1,a_2) \ot \ttB(b_1,b_2) = 
(\ttA \odotM \ttB)\big((a_1,b_1),(a_2,b_2)\big).
\]

The category  $\Lin$ of graded vector 
spaces and their homogeneous 
linear maps of arbitrary degrees is naturally enriched over $\Vect$.
It turns out that also $\Lin$   admits two $\Vect$-enriched symmetric monoidal
structures, the standard one and the
{McGill\/} monoidal structure defined~below.

The monoidal product of objects is for both structures
the usual tensor product of graded vector spaces, 
but the products  differ by their actions on
morphisms.  
The standard convention is that, for homogeneous maps $f:V' \to W'$,
$g: V'' \to W''$ and homogeneous vectors $u \in V'$, $v \in W'$ one
defines
\begin{subequations}
\begin{equation}
\label{eq:61}
(f \ot g) (u \ot v) = (-1)^{|g||u|} f(u) \ot g(v), 
\end{equation}
while some categorists at McGill University in Montreal would prefer
\begin{equation}
\label{eq:62}
(f \ot g) (u \ot v) = (-1)^{|f||v|} f(u)\ot g(v).
\end{equation}
\end{subequations}
The second convention would follow from the Koszul sign rule if we
are applying the morphisms from the right. Equation~(\ref{eq:62})
would then read as
\[ 
 (u \ot v)(f \ot g) = (-1)^{|f||v|} f(u)\ot g(v),
\] 
the unexpected sign coming from commuting $f$ over $v$.
We denote, only for the purposes of this section, 
the first monoidal structure by $\ot_{\rm S}$ and second by $\ot_{\rm M}$ (`S' 
abbreviating {\underline s}tandard and
`{M}' {\underline M}cGill). The corresponding 
monoidal categories will be denoted by $\Lin_{\rm S}$ and $\Lin_{\rm M}$,
respectively. Notice that both monoidal structures coincide on the
subcategory $\Vect$  of graded vector
spaces and their linear degree $0$ maps. 

It can be easily verified that $\Lin_{\rm S}$ is a symmetric
pseudomonoid in $\big(\CV,\odotS\big)$ 
while $\Lin_{\rm M}$   a symmetric
pseudomonoid in $\big(\CV,\odotM\big)$. The isomorphism
\[
\pi : \big(\CV,\odot_{\rm S}\big) \cong \big(\CV,\odot_{\rm M}\big)
\]
induces an isomorphism
\[
{\it PsMon}(\pi) : {\it PsMon}\big(\CV,\odot_{\rm S}\big) \cong 
 {\it PsMon}\big(\CV,\odot_{\rm M}\big)
\]
 of the corresponding
categories of pseudomonoids, and
\[
{\it PsMon}(\pi)(\Lin_{\rm S}) = \Lin_{\rm M}.
\]

We finish this section by a kindergarten example of an odd structure
in $\Lin$. 
An {\em odd associative algebra\/} is a couple $A = (A,\bullet)$
consisting of a graded vector space $A$ and a degree $+1$ operation 
\hbox{$\bullet : A \ot A \to A$} which is anti-associative, i.e.
\begin{equation}
\label{koupil_jsem_si_maleho_Titana}
\bullet(\id \ot \bullet) + \bullet(\bullet \ot \id)   = 0.
\end{equation}
Since 
the structure operation  has an odd
degree, the form of axiom~(\ref{koupil_jsem_si_maleho_Titana}) evaluated at
concrete elements may depend on the chosen monoidal structure of $\Lin$. 
Let us calculate
\[
\bullet (\id \ot \bullet) (x \ot y \ot z)  + \bullet  (\bullet \ot
\id)(x \ot y \ot z).
\]
for all $x,y,z \in A$.
While in $\Lin_{\rm S}$ we get
\begin{equation*}
%\label{dnes_Etiopska_restaurace}
(-1)^{|x|}x \bullet ( y \bullet  z) + (x \bullet  y) \bullet z,
\end{equation*}
in $\Lin_{\rm M}$ we obtain
\begin{equation*}
%\label{dnes_Etiopska_restaurace_1}
 x \bullet  ( y \bullet z)  + (-1)^{|z|}(x \bullet y) \bullet z.
\end{equation*}

The  categories of odd associative 
algebras in $\Lin_{\rm S}$ and in $\Lin_{\rm M}$ are however isomorphic. 
Indeed, the modification
\[
x \bullet y \longmapsto (-1)^{|x| + |y|} x \bullet y
\]
turns an odd associative algebra in $\Lin_{\rm S}$ into one in $\Lin_{\rm M}$ and vice
versa. In fact, there exists an one-to-one correspondence between odd
associative algebra structures on $A$ and usual associative algebra
structures on the
suspension $\susp A$. The corresponding associative product $\circ$ is
given by the commutative diagram   
\[
% Picture saved by xtexcad 2.4
\unitlength=.9pt
\begin{picture}(110.00,60.00)(0.00,0.00)
\put(55.00,4.00){\makebox(0.00,0.00)[b]{\scriptsize $\bullet$}}
\put(104.00,25.00){\makebox(0.00,0.00)[l]{\scriptsize$\susp$}}
\put(6.00,25.00){\makebox(0.00,0.00)[r]{\scriptsize$\desusp \ot \desusp\ $}}
\put(57.00,54.00){\makebox(0.00,0.00)[b]{\scriptsize$\circ$}}
\put(100.00,40.00){\vector(0,-1){30.00}}
\put(10.00,40.00){\vector(0,-1){30.00}}
\put(35.00,0.00){\vector(1,0){50.00}}
\put(40.00,50.00){\vector(1,0){45.00}}
\put(100,0.00){\makebox(0.00,0.00){\hphantom{.}$A$.}}
\put(10.00,0.00){\makebox(0.00,0.00){$A \ot A$}}
\put(100.00,50.00){\makebox(0.00,0.00){$\susp A$}}
\put(10.00,50.00){\makebox(0.00,0.00){$\susp A \ \ot \!\susp A$}}
\end{picture}
\]
If not stated otherwise, we will use in the rest of this note the
standard monoidal structure and drop the subscript ${\rm S}$.

\section{Odd modular {\op}s}
\label{Mikes_mi_popral_k_narozkam}

Odd modular operads are particular cases of twisted modular
operads which  were introduced in the classical 1998 paper
\cite[(4.2)]{getzler-kapranov:CompM98}.\footnote{More precisely, 
they are ${\mathfrak K}$-twisted
  operads, where ${\mathfrak K}$ is the dualizing
  cocycle~\cite[(4.8)]{getzler-kapranov:CompM98}.} 
They were originally defined as
algebras for a~certain monad of decorated graphs. For practical 
calculations it is however convenient to have a
{\em biased\/} definition which appeared much later in
\cite{doubek:_modul}. For the convenience of the reader we repeat it below. 
The first definition in which $\Vect$
denotes the category of graded vector space and their degree-$0$
linear maps is however standard.

\begin{definition}
\label{dnes_prijel_Ivo_do_Srni_bis}
A {\em modular module\/} (in $\Vect$) is a covariant functor
\[
E : \Cor \times \bbA  \rightarrow \Vect
\]
from the cartesian product of the category $\Cor$ of finite sets and
their isomorphisms with the
discrete category of natural numbers to $\Vect$.
\end{definition}

Explicitly, a modular module $E$ is a collection $E(S;g)$, $S \in \Cor$, 
$g \in \bbA$, of graded vector spaces together with
functorial degree $0$ morphisms 
\[
E(\sigma) : E(S;g) \to E(T;g)
\]
specified
for any isomorphism\footnote{Isomorphisms are the only morphisms in
  $\Cor$ by definition.} $\sigma : S\ \redukce{$\stackrel \cong\lra$}\ T$
and $g \in \bbA$.

Roughly  speaking, an odd modular {\op} is a modular {\op} whose
structure operations have `wrong' degrees and some axioms acquire
`wrong' signs. Let us give a precise:

\begin{definition}
\label{modular}
An {\em odd modular {\op}\/} is  a modular module
\[
\oM = \big\{\oM(S;g) \in \Vect\  \vrt \  (S;g)   \in  \Cor \times \bbA \big\}
\]
together with degree $+1$ morphisms ($\twooo ab$-operations)
\begin{equation}
\label{v_Galway}
\twooo ab:\oM\big(S_1 \sqcup \stt a;g_1\big)
\otimes \oM\big(S_2\sqcup \stt b;g_2\big)  
\to \oM ( S_1\sqcup S_2;g_1+g_2)
\end{equation}
defined for arbitrary disjoint finite 
sets $S_1$, $S_2$, symbols $a,b$, and 
arbitrary $g_1,g_2 \in \bbA$.  There are, moreover, 
degree $1$ morphisms (the contractions)
\[
\twxxi_{uv} = \twxxi_{vu} : \oM\bl S \sqcup \stt {u,v} ;g\br  
\to \oM(S ;g+1)
\]
given for any finite
set $S$,  $g \in \bbA$, and symbols $u,v$.\footnote{We are using the 
notation for
  structure operations of odd modular operads introduced in
\cite{ten-vul-Kaufmann}.}
These data are required to satisfy the following axioms.
\begin{enumerate}
\itemindent -1em
\itemsep .3em 
\item  [(i)]
For arbitrary isomorphisms $\rho : S_1 \sqcup \stt a \to  T_1$
  and $\sigma :  S_2 \sqcup \stt b \to  T_2$ of finite  
 sets and  $g_1$, $g_2 \in \bbA$, one has the equality
\[
\oM\bl\rho|_{S_1}\sqcup\sigma|_{S_2}\br 
\twooo ab =
\twooo{\rho(a)}{\sigma(b)} \ \big(\oM\(\rho\)\ot\oM\(\sigma\)\big)
\]
of maps 
\[
\oM\bl S_1 \sqcup \stt a ;g_1\br \otimes \oM \bl S_2 \sqcup \stt b
;g_2\br 
\to
\oM\bl T_1\sqcup T_2
\setminus  \{\rho(a),\sigma(b)\};g_1+g_2\br.
\]

\item [(ii)] 
For an isomorphism $\rho : S \sqcup \stt {u,v} \to T
  $ of finite sets and  $g \in \bbA$, one has the
  equality
\[
\oM\bl\rho|_S\br \ \twxxi_{uv} = 
\twxxi_{\rho(u)\rho(v)}\oM\(\rho\)
\]
of maps $\oM \bl S \sqcup \stt {u,v};g \br \to \oM\bl T \setminus
\{\rho(u),\rho(v)\} ;g+1\br$.

\item [(iii)]
For $S_1$, $S_2$, $a$, $b$ and $g_1$, $g_2$ as in~(\ref{v_Galway}),
one has the equality
\[
\twooo{a}{b} =  \twooo{b}{a} \tau
\]
of maps $\oM\(S_1 \sqcup \stt a;g_1\)\otimes \oM\(S_2 \sqcup \stt b;g_2\)
\to \oM\bl S_1\sqcup S_2;g_1+g_2\br$.\footnote{Recall that $\tau$ is
  the commutativity constraint~(\ref{flip}) in the category of graded vector
  spaces.}

\item  [(iv)]
For mutually disjoint   sets
  $S_1,S_2,S_3$, symbols  
$a, b,c, d$ and $g_1,g_2,g_3 \in \bbA$,   one has the equality
\[
\label{Dnes_s_Jaruskou_k_Pakouskum}
\twooo ab (\id \ot \twooo cd)  = -\twooo cd (\twooo ab \ot
\id)
\]
of maps from
$\oM \bl S_1 \sqcup \stt a;g_1 \br \ot \oM\bl S_2  \sqcup \stt {b,c};g_2\br 
\ot  \oM\bl S_3 \sqcup \stt d;g_3\br$  to the space
$\oM\bl S_1 \sqcup S_2 \sqcup S_3;
g_1\!+\!
g_2\! +\! g_3\br$.

\item  [(v)]
For a finite  set $S$, symbols $a,b,c,d$ and  $g \in \bbA$
one has the equality
\[
\twxxi_{ab} \ \twxxi_{cd} =- \twxxi_{cd} \ \twxxi_{ab}
\]
of maps $\oM\bl S \sqcup \{a,b,c,d\};g\br 
\to \oM (S;g+2)$.

\item [(vi)] 
For finite sets $S_1, S_2$, symbols $a,b,c,d$ and  $g_1,g_2
  \in \bbA$, one has the equality
\[
\twxxi_{ab} \ \twooo{c}{d} = -\twxxi_{cd} \ \twooo{a}{b}
\]
of maps $\oM\bl S_1 
\sqcup \stt {a,c};g_1\br \ot \oM \bl S_2 \sqcup \stt {b,d};g_2\br
\to \oM( S_1 \sqcup S_2;g_1+g_2+1)$.

\item [(vii)] 
For  finite sets $S_1, S_2$, symbols
$a,b,u,v$, and $g_1,g_2 \in \bbA$, one has the equality
\[
\twooo{a}{b} \ (\twxxi_{uv}\ot\id) = -\twxxi_{uv} \ \twooo{a}{b}
\]
of maps $\oM\bl
S_1 \sqcup \stt{a,u,v};g_1\br \ot \oM \bl S_2 \sqcup 
\stt b;g_2\br \to \oM (S_1 \sqcup S_2 ;g_1+g_2+1)$.
\end{enumerate} 
\end{definition}

As for odd associative algebras discussed in Section~\ref{zase_prsi},
the form of some axioms  of odd modular {\op}s evaluated at
concrete elements depends on the chosen monoidal structure of $\Lin$.  
Let us, for instance, evaluate axiom~(iv) at homogeneous elements 
\[
x \in \oM \bl S_1 \sqcup \stt a;g_1 \br,\
y \in  \oM\bl S_2  \sqcup \stt {b,c};g_2\br 
\ \hbox { and } \ z \in 
\oM\bl S_3 \sqcup \stt d;g_3\br
\]
i.e.~expand
\[
\twooo ab (\id \ot \twooo cd)(x \ot y \ot z)  = - \twooo cd (\twooo ab \ot
\id)(x \ot y \ot z).
\]
While in $\Lin_{\rm S}$ we get
\begin{subequations}
\begin{equation}
\label{dnes_Etiopska_restaurace}
(-1)^{|x|}x \twooo ab ( y \twooo cd z) = - (x \twooo ab y) \twooo cd z,
\end{equation}
in $\Lin_{\rm M}$ we obtain
\begin{equation}
\label{dnes_Etiopska_restaurace_1}
 x \twooo ab ( y \twooo cd z) = - (-1)^{|z|}(x \twooo ab y) \twooo cd z.
\end{equation}
\end{subequations}
\begin{subequations}
Likewise, axiom~(vii) in $\Lin_{\rm S}$ reads
\begin{equation}
\label{za_tyden_do_Prahy}
\twxxi_{uv}(x) \twooo{a}{b} y
 = -\ \twxxi_{uv} (x \twooo{a}{b} y)
\end{equation}
while in $\Lin_{\rm M}$ one would get
\begin{equation}
\label{za_tyden_do_Prahy_s_Germanwings}
(-1)^{|y|}\twxxi_{uv}(x) \twooo{a}{b} y
 = -\ \twxxi_{uv} (x \twooo{a}{b} y)
\end{equation}
\end{subequations}
for $x,y$ belonging to the appropriate components of $\oM$.
The remaining axioms are the same in both
monoidal structures.

It turns our that the  categories of odd modular
{\op}s in $\Lin_{\rm S}$ and in $\Lin_{\rm M}$ are isomorphic;
the modification
\begin{equation}
\label{posledni_den_konference}
x \twooo  ab y \mapsto (-1)^{|x| + |y|} x \twooo ab y,
\   \twxxi_{uv}(x) \mapsto (-1)^{|x|} \twxxi_{uv}(x),
\end{equation}
turns an odd modular {\op} in $\Lin_{\rm S}$ into one in $\Lin_{\rm M}$ and vice
versa.

It is however not true that an odd modular {\op} structure is the same as an
ordinary one on the suspension of the underlying modular module. While
the suspended $\twooo ab$-operations are of degree $0$ as for the
ordinary modular {\op}s, the suspended contractions $\twxxi_{uv}$
retain degree~$1$. The categories of ordinary and odd modular {\op}s
are genuinely different.

\section{Odd endomorphism {\op}s}
\label{zitra_letim_do_Prahy}

The classical definition of the 
(ordinary) modular endomorphism {\op} $\End_V$ given
in \hbox{\cite[(1.7)]{getzler-kapranov:CompM98}}  requires as 
the input data a graded
vector space $V$ with a non-degenerate symmetric bilinear form $B : V \ot V \to
\bfk$ of degree $0$. For $[n] := 
\{1,\ldots,n\}$  one  puts
\[
\End_V\big([n]\big) := V^{\ot n}, \ n \geq 0,
\] 
with the operadic structure given by `contracting indexes'
using $B$.
If $B$ has degree $+1$, the same construction leads to an odd
modular endomorphism operad  \cite[Example 5.3]{markl:la}.   

One easily observes that $B$ need not be non-degenerate
-- everything makes sense even in the extreme case when $B =
0$. Moreover, in mathematical physics, it is more natural to work in
the dual settig with a symmetric degree $+1$ tensor \hbox{$s \in V\!
  \ot\! V$} instead
of~$B$, and the components of the odd endomorphism operad given by 
\[
\End_V\big([n]\big) := \big(V^{\ot n}\big)^*, \ n \geq 0.
\]
The operadic structure is given by `expanding indexes'
using $s$. We will focus to this version of the odd endomorphism operad.

It turns out that there are two interpretations
what expanding indexes means. The   
first, seemingly preferable one, is expressed by~(\ref{psani_v_Bonnu})
and~(\ref{pujdeme_dnes_k_Mechacum?}) below. It does not involve
duals and uses only canonical isomorphisms. The other one, represented
by~(\ref{eq:prvni_psani_v_Bonnu}) and~(\ref{66}), is much less aesthetically
pleasing since it needs duals and inclusions of the form
\[
A^* \ot B^* \hookrightarrow (A\ot
B)^*
\]
which from seemingly random reasons go in the desired direction. It is
the peculiarity of the odd case that both constructions lead to
different results. If we assume the standard monoidal structure of
$\Lin$, then the correct result is given by the second, ugly one.

We will need the tensor
product of a family  $\{V_c\}_{c \in S}$ of graded vector spaces
indexed by a~finite set $S \in \Cor$. 
Since $S$ is not a priory ordered, we want a concept 
that would not depend on a chosen order. The idea is to choose an
order, then perform the usual tensor product, and then identify
the products over different orders using the Koszul sign rule.
Since an order of a finite set $S$ with $n$ elements is the
same as an isomorphism $\omega : \stt{1,\ldots,n} \redukce{{
    $\stackrel\cong\to$ }}
S$, we are led to the following:

\begin{definition}
\label{nakonec_jsem_nejel_protoze_prselo}
The {\em unordered tensor product\/} $\bigotimes_{c \in S} V_c$ of the
collection  $\{V_c\}_{c \in S}$ is
the vector space of equivalence classes of usual tensor products
\begin{equation}
\label{v_patek_domu_za_Jaruskou}
v_{\omega(1)} \ot \cdots \ot v_{\omega(n)} \in V_{\omega(1)} 
\ot \cdots \ot V_{\omega(n)},\ \omega : \stt{1,\ldots,n} 
\redukce{ $\stackrel\cong\lra$ } S,
\end{equation}
modulo the identifications
\[
v_{\omega(1)} \ot \cdots \ot v_{\omega(n)} 
\sim \epsilon(\sigma)\
v_{\omega\sigma(1)} \ot \cdots \ot v_{\omega\sigma(n)},\ \sigma \in \Sigma_n,
\]
where $\epsilon(\sigma)$ is the Koszul sign~(\ref{Koszul_sign}) of the
permutation $\sigma$.
\end{definition}

The need for a subtler version of the tensor product is caused by
  the fact that the category $\Vect$ of graded vector spaces is a
 symmetric monoidal category with a {\em non-trivial symmetry\/}. 
Similar unordered products can
  be defined in any symmetric monoidal category with finite colimits,
see e.g.~\cite[Def.~II.1.58]{markl-shnider-stasheff:book}.
Let us formulate two important properties of unordered tensor
products. 

\begin{lemma}
\label{bila_nemoc}
Let
$\sigma : S \to D$ be an isomorphism of finite sets, $\stt {V_c}_{c\in S}$
and $\stt {W_d}_{d\in D}$ collections of graded vector spaces, and
$\varphi = \stt{\varphi_c : V_c \to W_{\sigma c}}_{c \in S}$ a family of
linear maps. 
Then the assignment
\[
\bigotimes_{c\in S}V_c \ni \big[v_{\omega(1)}\ot \cdots \ot v_{\omega(n)}\big]
\longmapsto \big[w_{\sigma\omega(1)}\ot \cdots \ot
w_{\sigma\omega(n)}\big]
\in \bigotimes_{d \in D}W_d
\]
with
$w_{\sigma\omega(i)} := \varphi_{\omega(i)}(v_{\omega(i)}) \in
W_{\sigma\omega(i)}$, $1 \leq i
\leq n$,
defines a natural map
\[
\overline{(\sigma,\varphi)} :
\bigotimes_{c \in S}V_c \to \bigotimes_{d \in D}W_d
\]
of unordered products
\end{lemma}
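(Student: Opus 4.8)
The plan is to observe that the assignment is $\bfk$-linear and sign-free on each fixed ordered representative, so linearity is automatic and the entire content of the lemma is \emph{well-definedness}: that the class $\overline{(\sigma,\varphi)}\big[v_{\omega(1)}\ot\cdots\ot v_{\omega(n)}\big]$ does not depend on the order $\omega:\stt{1,\ldots,n}\to S$ used to present it. Naturality (functoriality) will then be a formal consequence.

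First I would fix an order $\omega$ and define a linear map $F_\omega$ on the ordinary tensor product $V_{\omega(1)}\ot\cdots\ot V_{\omega(n)}$ by the stated formula, landing in $\bigotimes_{d\in D}W_d$. By Definition~\ref{nakonec_jsem_nejel_protoze_prselo}, two orders differ by a permutation, and well-definedness means exactly that for every $\pi\in\Sigma_n$ one has $F_\omega(v_{\omega(1)}\ot\cdots\ot v_{\omega(n)})=\epsilon(\pi)\,F_{\omega\pi}(v_{\omega\pi(1)}\ot\cdots\ot v_{\omega\pi(n)})$, where $\epsilon(\pi)=\epsilon(\pi;v_{\omega(1)},\ldots,v_{\omega(n)})$ is the Koszul sign~(\ref{Koszul_sign}) of $\pi$ on the source entries.

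The crux is the following bookkeeping. The left-hand side is the target class $\big[w_{\sigma\omega(1)}\ot\cdots\ot w_{\sigma\omega(n)}\big]$ (order $\sigma\omega$), while the right-hand side is $\epsilon(\pi)\,\big[w_{\sigma\omega\pi(1)}\ot\cdots\ot w_{\sigma\omega\pi(n)}\big]$ (order $\sigma\omega\pi$). The key structural point is that, because $\sigma$ is applied on the left, the \emph{same} permutation $\pi$ relates the two source orders $\omega,\omega\pi$ and the two target orders $\sigma\omega,\sigma\omega\pi=(\sigma\omega)\pi$. Hence the defining identification in $\bigotimes_{d\in D}W_d$ gives
\[
\big[w_{\sigma\omega(1)}\ot\cdots\ot w_{\sigma\omega(n)}\big]
=\epsilon(\pi;w_{\sigma\omega(1)},\ldots,w_{\sigma\omega(n)})\,
\big[w_{\sigma\omega\pi(1)}\ot\cdots\ot w_{\sigma\omega\pi(n)}\big],
\]
so that well-definedness reduces to the single sign identity $\epsilon(\pi;v_{\omega(1)},\ldots,v_{\omega(n)})=\epsilon(\pi;w_{\sigma\omega(1)},\ldots,w_{\sigma\omega(n)})$. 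This is the main, and essentially the only, obstacle. I would resolve it by noting that the Koszul sign depends only on $\pi$ and on the parities of the degrees of its arguments; since each $\varphi_c$ preserves degree, $|w_{\sigma c}|=|v_c|$ for all $c\in S$, whence the two signs coincide. (It is precisely here that the degree-preservation of the $\varphi_c$ enters; for maps of nonzero degree the naive sign-free formula would fail to descend.)

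Finally, I would dispose of naturality by evaluating on a single representative: $\overline{(\id_S,\stt{\id_{V_c}})}$ is visibly the identity, and for a further isomorphism $\rho:D\to E$ with a family $\psi=\stt{\psi_d:W_d\to U_{\rho d}}$ one checks directly that $\overline{(\rho,\psi)}\circ\overline{(\sigma,\varphi)}=\overline{\big(\rho\sigma,\stt{\psi_{\sigma c}\circ\varphi_c}\big)}$, since the target entries are formed factorwise and no further signs intervene.
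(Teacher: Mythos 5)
Your proof is correct, and it is exactly the ``direct verification'' that the paper leaves to the reader (its entire proof of this lemma is the phrase ``A direct verification''): you reduce well-definedness to the sign identity $\epsilon(\pi;v_{\omega(1)},\dots,v_{\omega(n)})=\epsilon(\pi;w_{\sigma\omega(1)},\dots,w_{\sigma\omega(n)})$, which holds because the $\varphi_c$ preserve degrees, and functoriality is then formal. Your parenthetical observation that the sign-free formula would fail to descend for maps of nonzero degree is also accurate and correctly pinpoints where the hypothesis enters.
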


\begin{proof}
A direct verification.
\end{proof}

A particularly important case of the above lemma is when $V_c = V_d =
V$ for all $c \in S$, $d \in D$, and $\varphi_c : V \to V$ is the
identity for all $c\in S$. Lemma~\ref{bila_nemoc} then gives a~natural
map
\begin{equation}
\label{jdu_si_zabehat}
\overline \sigma : =\overline{(\sigma,\varphi)} :
\bigotimes_{c \in S}V_c \to \bigotimes_{d \in D}V_d.
\end{equation}

\begin{lemma}
\label{Tequila_v_lednici}
For disjoint finite sets $S', S''$, one has a canonical 
isomorphism
\[
%\textstyle
\bigotimes_{c' \in S'} V_{c'} \ot \bigotimes_{c'' \in S''}V_{c''} 
\cong
\bigotimes_{c\in S'\sqcup\, S''} V_c.
\]
\end{lemma}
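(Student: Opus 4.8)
The plan is to build the isomorphism from a choice of orderings of $S'$ and $S''$, check that the Koszul identifications on the two factors match the single identification on $S'\sqcup S''$, and finally verify that the resulting map is independent of the chosen orderings, so that it is canonical as claimed. Set $m := \card(S')$, $k := \card(S'')$ and $n := m+k$. First I would fix bijections $\omega' : \stt{1,\ldots,m} \to S'$ and $\omega'' : \stt{1,\ldots,k} \to S''$ and form their concatenation $\omega : \stt{1,\ldots,n} \to S' \sqcup S''$, given by $\omega(i) := \omega'(i)$ for $1 \le i \le m$ and $\omega(m+j) := \omega''(j)$ for $1 \le j \le k$. Concatenating representatives then gives the degree-$0$ bilinear assignment
\[
\big[v_{\omega'(1)} \ot \cdots \ot v_{\omega'(m)}\big] ,\
\big[w_{\omega''(1)} \ot \cdots \ot w_{\omega''(k)}\big]
\longmapsto
\big[v_{\omega'(1)} \ot \cdots \ot v_{\omega'(m)} \ot w_{\omega''(1)} \ot \cdots \ot w_{\omega''(k)}\big],
\]
whose induced linear map out of the tensor product of the two factors is the candidate isomorphism $\Phi$.

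The first thing to check is that $\Phi$ respects the Koszul identifications of Definition~\ref{nakonec_jsem_nejel_protoze_prselo} on each factor. Reordering the first representative by $\sigma' \in \Sigma_m$ multiplies the source by $\epsilon(\sigma')$, and reordering the second by $\sigma'' \in \Sigma_k$ contributes $\epsilon(\sigma'')$. On the target these act by the block permutation $\sigma' \oplus \sigma'' \in \Sigma_n$ that permutes the first $m$ slots by $\sigma'$ and the last $k$ by $\sigma''$. Since no element of the first block is ever commuted past an element of the second, the Koszul sign is multiplicative in this case, $\epsilon(\sigma' \oplus \sigma'') = \epsilon(\sigma')\,\epsilon(\sigma'')$, and the degrees entering these signs are the degrees of the very vectors being mapped; hence the signs agree and $\Phi$ descends to the unordered products.

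To see that $\Phi$ is an isomorphism, I would use that for the fixed ordering $\omega$ the map $V_{\omega(1)} \ot \cdots \ot V_{\omega(n)} \to \bigotimes_{c \in S' \sqcup S''} V_c$ sending an ordered tensor to its class is a linear isomorphism, and likewise for $\omega'$ and $\omega''$ on the two factors; this is exactly the content of Definition~\ref{nakonec_jsem_nejel_protoze_prselo} (see also \cite[Def.~II.1.58]{markl-shnider-stasheff:book}). Under these identifications $\Phi$ becomes the associativity isomorphism
\[
\big(V_{\omega'(1)} \ot \cdots \ot V_{\omega'(m)}\big) \ot \big(V_{\omega''(1)} \ot \cdots \ot V_{\omega''(k)}\big)
\cong V_{\omega(1)} \ot \cdots \ot V_{\omega(n)}
\]
of ordinary tensor products, which is manifestly invertible.

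The remaining, genuinely sign-sensitive point is that $\Phi$ does not depend on the orderings $\omega'$, $\omega''$. Replacing $\omega'$ by $\omega'\sigma'$ alters the chosen representative on the first factor by $\epsilon(\sigma')$, and at the same time changes the concatenation $\omega$ by the block permutation $\sigma' \oplus \id$, which contributes the same sign $\epsilon(\sigma')$ on the target by Lemma~\ref{bila_nemoc}; the two cancel, and symmetrically for $\omega''$. Thus $\Phi$ is independent of all choices and is therefore canonical. I expect this last bookkeeping — confirming that the block Koszul signs produced on the source and on the target always cancel — to be the main (if routine) obstacle, everything else reducing to the associativity of the ordinary tensor product transported through the ordering isomorphisms.
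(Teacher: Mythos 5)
Your proposal is correct and follows essentially the same route as the paper: the isomorphism is defined by concatenating the chosen orderings $\omega'$ and $\omega''$ into an ordering $\omega' \sqcup \omega''$ of $S' \sqcup S''$ and mapping classes of representatives accordingly; the paper simply states this assignment, while you additionally spell out the routine verifications (multiplicativity of the Koszul sign on block permutations, invertibility via the ordinary associativity isomorphism, and independence of choices). Note only that your final step is redundant: once the map is defined on arbitrary representatives and shown to respect the identifications, independence of the orderings is automatic, since a change of ordering is just a change of representative.
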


\begin{proof}
Each $\omega' : \stt{1,\ldots,n} \redukce{ $ \stackrel\cong\to $ }
S'$ and $\omega'' : \stt{1,\ldots,m} \redukce{ $ \stackrel\cong\to $ }
S''$ determine an isomorphism
\[
\omega' \sqcup \omega'' : \stt{\rada 1{n+m}}  \redukce{ $
  \stackrel\cong\lra $ }
 S'\sqcup S''
\]
by the formula
\[
(\omega' \sqcup \omega'')(i) :=
\begin{cases}
\omega'(i),&\hbox{if $1 \leq i \leq n$, and}
\cr
\omega''(i-n),&\hbox{if $n < i \leq n+m$\ .}
\end{cases}
\]
The isomorphism of the lemma is then given by the assignment
\[
[v_{\omega'(1)} \ot \cdots \ot v_{\omega'(n)}] \ot
[v_{\omega''(1)} \ot \cdots \ot v_{\omega''(m)}]
\longmapsto [v_{(\omega' \sqcup\, \omega'')(1)} \ot \cdots \ot 
v_{(\omega' \sqcup\, \omega'')(n+m)}].
\]
This finishes the proof.
\end{proof}

The input data of the odd modular endomorphism
{\op} is a graded vector space $V$ with 
a symmetric tensor $s \in V\! \ot\! V$ of degree $+1$. The symmetry
means  that 
$\tau(s) = s$, where $\tau$ is the interchange~(\ref{flip}). We
will interpret as usual 
$s$ as a linear degree $+1$ map $s: \bfk \to V\ot V$. 
For a finite set 
$S$~put
\[\textstyle
\End_V(S) := \Lin\big(\bigotimes_{c \in S} V_c,\bfk\big) = 
(\bigotimes_{c \in S} V_c)^\dual
\]
where $V_c := V$ for each $c
\in S$. 
Given an isomorphism $\sigma : S \to D$ of finite sets, we define the
induced map
\[
\End_V(\sigma)  : \End_V(S) \to \End_V(D)
\]
by $\End_V(\sigma)(x) := x \, \overline \sigma^{-1}$  
for $x :\bigotimes_{c \in S} V_c \to \bfk \in
\End_V(S)$ and $\overline\sigma$ as in~(\ref{jdu_si_zabehat}).

Let $S_1,S_2$ be disjoint finite sets and $a \not= b$ two symbols. Our
next task will be to define, for linear functionals
\[
x \in \End_V\big(S_1 \sqcup \stt a\big)
\ \hbox { and } \ y \in \End_V\big(S_2
\sqcup \stt b\big),
\]
their $\twooo ab$-product $x\twooo ab y \in \End_V\big(S_1 \sqcup
S_2\big)$. The  natural choice is obviously the composition
\begin{align*}
%\label{za_14_dni_do_Dublinu}
%\textstyle
\bigotimes_{c \in S_1 \sqcup S_2}& \hskip -.5em V_{c} \stackrel\cong\lra
 \bigotimes_{c' \in S_1} V_{c'} \ot
\bigotimes_{c'' \in S_2} V_{c''}
 \stackrel\cong\lra
 \bigotimes_{c' \in S_1} V_{c'} \ot \bfk \ot
\bigotimes_{c'' \in S_2} V_{c''}
\\ 
\nonumber 
& \stackrel{\sid \ot s \ot \sid}\vlra   \bigotimes_{c' \in S_1} V_{c'}
\ot V_a \ot V_b \ot
\bigotimes_{c'' \in S_2} V_{c''} 
\stackrel\cong\lra\bigotimes_{c' \in S_1 \sqcup \stt a}\hskip -.5em V_{c'}
\ot \bigotimes_{c'' \in S_2 \sqcup \stt b}\hskip -.5em
 V_{c''} \stackrel{x \ot y}\vlra \bfk
\end{align*}
in which the isomorphisms are those of
Lemma~\ref{Tequila_v_lednici}.   In shorthand,
\[
x\twooo ab y = (x \ot y)\big(\id_{V^{\ot S_1}} \ot s \ot \id_{V^{\ot
    S_2}}\big)
\]
and, denoting $\id_{S_1} := \id_{V^{\ot S_1}}$ and  
$\id_{S_2} := \id_{V^{\ot S_2}}$,  still more concisely
\begin{subequations}
\begin{equation}
\label{psani_v_Bonnu}
x\twooo ab y := (x \ot y) \big(\id_{S_1} \ot s \ot \id_{S_2}\big).
\end{equation}
Alternatively, one may define $x\twooo ab y$ as the result of the application
of the composition
\begin{align*}
%\label{predposledni_den_uvodni_konference}
\Big(\bigotimes_{c' \in S_1 \sqcup \stt a}&\hskip -.5em V_{c'}\Big)^\dual
\ot 
\Big(\bigotimes_{c'' \in S_2 \sqcup \stt b}\hskip -.5em
 V_{c''}\Big) ^\dual
\hookrightarrow 
\Big(\bigotimes_{c' \in S_1 \sqcup \stt a}\hskip -.5em V_{c'}
\ot
\bigotimes_{c'' \in S_2 \sqcup \stt b}\hskip -.5em
 V_{c''}\Big) ^\dual
\\
\nonumber 
\stackrel\cong\lra &
\Big(
\bigotimes_{c' \in S_1} V_{c'}
\ot V_a\ot V_b \ot
\bigotimes_{c'' \in S_2} V_{c''} \Big)^\dual
 \stackrel{(\sid \ot s \ot \sid)^\dual}\vlra
\Big(\bigotimes_{c' \in S_1} V_{c'} \ot \bfk \ot
\bigotimes_{c'' \in S_2} V_{c''} \Big)^\dual
\\
\nonumber 
\stackrel\cong\lra &
\Big( \bigotimes_{c' \in S_1} V_{c'} \ot
\bigotimes_{c'' \in S_2} V_{c''} \Big)^\dual
\stackrel\cong\lra
\Big(\bigotimes_{c \in S_1 \sqcup S_2} \hskip -.5em V_{c}\Big)^\dual
\end{align*}
to $x \ot y \in \big(\bigotimes_{c' \in S_1 \sqcup \stt a} V_{c'}\big)^\dual
\ot \big(\bigotimes_{c'' \in S_2 \sqcup \stt b} 
V_{c''}\big) ^\dual$. In shorthand,
\begin{equation}
\label{eq:prvni_psani_v_Bonnu}
x\twooo ab y := \big(\id_{S_1} \ot s \ot \id_{S_2}\big)^\dual(x \ot y) .
\end{equation}
\end{subequations}
We shall keep in mind 
that~(\ref{eq:prvni_psani_v_Bonnu}) implicitly involves 
canonical identifications and inclusions. 

An obvious way to define the contraction
 $\twxxi_{uv}x \in \End_V(S;g+1)$ of a linear functional $x \in \End_V\big(S
\sqcup \{u,v\};g\big)$ is the composition
\begin{equation*}
\bigotimes_{c \in S} V_{c} \cong \bfk \ot \bigotimes_{c \in S} V_{c}
\stackrel{s \ot \sid}\vlra  
V_u \ot V_v \ot
\bigotimes_{c \in S} V_{c} 
\stackrel\cong\lra\bigotimes_{c \in S \sqcup \stt {u,v}} V_{c}
 \stackrel{x}\lra \bfk.
\end{equation*}
In shorthand,
\begin{subequations}
\begin{equation}
\label{pujdeme_dnes_k_Mechacum?}
\twxxi_{uv}x :=  x \big(s \ot \id_S).
\end{equation}
The commutativity of the diagram 
\begin{equation*}
\raisebox{-5em}{}
{% Picture saved by xtexcad 2.4
\thicklines
\unitlength=1.2pt
\begin{picture}(180.00,50.00)(0.00,50.00)
\put(150.00,20.00){\vector(1,1){20.00}}
\put(70.00,10.00){\vector(1,0){27.00}}
\put(10.00,40.00){\vector(1,-1){20.00}}
\put(150.00,80.00){\vector(1,-1){20.00}}
\put(70.00,90.00){\vector(1,0){30.00}}
\put(10.00,60.00){\vector(1,1){20.00}}
\put(85.00,3.00){\makebox(0.00,0.00){\scriptsize $\id \ot s$}}
\put(165.00,25.00){\makebox(0.00,0.00){\scriptsize $\cong$}}
\put(165.00,75.00){\makebox(0.00,0.00){\scriptsize $\cong$}}
\put(85.00,97.00){\makebox(0.00,0.00){\scriptsize $s \ot \id$}}
\put(15.00,75.00){\makebox(0.00,0.00){\scriptsize $\cong$}}
\put(15.00,25.00){\makebox(0.00,0.00){\scriptsize $\cong$}}
\put(40.00,10.00){\makebox(0.00,0.00){$\bigotimes_{c \in S} V_{c} \ot \bfk$}}
\put(140.00,10.00){\makebox(0.00,0.00){$
\bigotimes_{c \in S} V_{c} \ot V_u \ot V_v$}}
\put(180.00,50.00){\makebox(0.00,0.00){$\bigotimes_{c \in S \sqcup
      \stt {u,v}} V_{c}$\ .}}
\put(140.00,90.00){\makebox(0.00,0.00){$V_u \ot V_v \ot
\bigotimes_{c \in S} V_{c}$}}
\put(40.00,90.00){\makebox(0.00,0.00){$\bfk \ot \bigotimes_{c \in S} V_{c}$}}
\put(0.00,50.00){\makebox(0.00,0.00){$\bigotimes_{c \in S} V_{c}$}}
\end{picture}}
\end{equation*}
implies that we could replace $(s\ot \id_S)$ 
in~(\ref{pujdeme_dnes_k_Mechacum?}) 
by $(\id_S \ot s)$ with the same result. We could in fact place $s$
into an arbitrary position without affecting the result. A similar
remark applies also to~(\ref{psani_v_Bonnu})
and~(\ref{eq:prvni_psani_v_Bonnu}).
 
Another possibility is to define
$\twxxi_{uv}x$ as the result of the application
\begin{equation*}
\Big(\bigotimes_{c \in S \sqcup \stt {u,v}} V_{c}\Big)^\dual
\stackrel\cong\lra
\Big(V_u \ot V_v \ot
\bigotimes_{c \in S} V_{c}\Big) ^\dual
\stackrel{(s \ot \sid)^\dual}\vlra 
\Big(\bfk \ot \bigotimes_{c \in S} V_{c}\Big)^\dual \cong 
\Big(\bigotimes_{c \in S} V_{c}\Big)^\dual
\end{equation*}
to $x \in\big(\bigotimes_{c \in S \sqcup \stt{u,v}} V_c\big)^\dual$. 
In shorthand, 
\begin{equation}
\label{66}
\twxxi_{uv}x := \big(s \ot \id_S)^\dual (x).
\end{equation}
\end{subequations}

Here comes a surprise. Since $|s|=1$, the two definitions
of the $\twooo ab$-operation, i.e.~the one 
via~(\ref{psani_v_Bonnu}) and the one 
via~(\ref{eq:prvni_psani_v_Bonnu}), lead to
different results! The reason is that they are not dual to each other,
since the duality~(\ref{dostane_Jarka_dovcu?}) 
acquires a nontrivial sign. The resulting
$x\twooo ab y$'s differ by $(-1)^{|x|+|y|}$.

Likewise, definitions~(\ref{pujdeme_dnes_k_Mechacum?})
and~(\ref{66}) are not dual to each other and
the resulting $\twxxi_{uv}(x)$'s differ by  $(-1)^{|x|}$.
What happens is described in the following proposition;
recall that $\Lin_{\rm S}$ and $\Lin_{\rm M}$ denote the two versions
of the category $\Lin$ discussed in Section~\ref{zase_prsi}.

\begin{proposition}
\label{pokusim_se_vratit_kufr}
The modular collection $\End_V$ with operations $\twooo ab$ and
$\twxxi_{uv}$ defined by~(\ref{eq:prvni_psani_v_Bonnu})
and~(\ref{66}) is an odd modular {\op} in
$\Lin_{\rm S}$ while~(\ref{psani_v_Bonnu}) 
and~(\ref{pujdeme_dnes_k_Mechacum?}) give an odd modular {\op} in
$\Lin_{\rm M}$. 
\end{proposition}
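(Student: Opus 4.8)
The plan is to verify the seven axioms of Definition~\ref{modular} for a single one of the two sets of operations and then to obtain the other assertion formally. For the formal step, recall from the discussion preceding the proposition that the dual-sign convention~(\ref{dostane_Jarka_dovcu?}), applied to the degree-$1$ map $\id_{S_1}\otimes s\otimes\id_{S_2}$, makes the products~(\ref{eq:prvni_psani_v_Bonnu}) and~(\ref{psani_v_Bonnu}) differ by $(-1)^{|x|+|y|}$ and the contractions~(\ref{66}) and~(\ref{pujdeme_dnes_k_Mechacum?}) differ by $(-1)^{|x|}$. These are exactly the signs of the modification~(\ref{posledni_den_konference}), which transports odd modular operads between $\Lin_{\rm S}$ and $\Lin_{\rm M}$; since that modification carries the pair~(\ref{psani_v_Bonnu}),~(\ref{pujdeme_dnes_k_Mechacum?}) to the pair~(\ref{eq:prvni_psani_v_Bonnu}),~(\ref{66}), the two assertions are equivalent. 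It therefore suffices to prove that the operations~(\ref{psani_v_Bonnu}) and~(\ref{pujdeme_dnes_k_Mechacum?}) form an odd modular operad in $\Lin_{\rm M}$.

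I would dispose of the equivariance and symmetry axioms first. Axioms~(i) and~(ii), asserting that $\twooo ab$ and $\twxxi_{uv}$ commute with the relabelling maps $\End_V(\sigma)$, are immediate: both operations are assembled from the tensor $s$ and the canonical natural isomorphisms of Lemmas~\ref{bila_nemoc} and~\ref{Tequila_v_lednici}, so naturality is inherited directly. Axiom~(iii), $\twooo ab=\twooo ba\,\tau$, follows from the hypothesis $\tau(s)=s$: interchanging the two slots occupied by $s$ is exactly compensated by the flip $\tau$ on the arguments. None of these three verifications produces an odd sign.

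The substance lies in axioms~(iv)--(vii), where the mechanism is uniform: each identity compares two composites, each built by inserting two copies of the degree-$1$ tensor $s$ into disjoint pairs of slots and then evaluating the given functionals, the two sides differing only in the order in which the two copies of $s$ are introduced. For axiom~(iv), expanding $x\twooo ab(y\twooo cd z)$ and $(x\twooo ab y)\twooo cd z$ through the isomorphisms of Lemma~\ref{Tequila_v_lednici} yields the same monomials in $x,y,z$ and the components of $s$, decorated by different signs; the Koszul signs of the unordered tensor product~(\ref{v_patek_domu_za_Jaruskou}) are common to both sides and drop out, and once the factor from the $\Lin_{\rm M}$-evaluation of $\twooo ab\otimes\id$ is included, the two sides differ by exactly the overall $-1$ of~(\ref{dnes_Etiopska_restaurace_1}). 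The computations for~(v),~(vi) and~(vii) --- involving, respectively, two contractions, a contraction following a product, and a product following a contraction --- are entirely parallel, the one for~(vii) reproducing~(\ref{za_tyden_do_Prahy_s_Germanwings}).

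The one genuinely laborious point, and the anticipated obstacle, is this final sign count. The decisive sign in~(iv)--(vii) is the Koszul sign incurred when the degree-$1$ tensor $s$ furnished by one operation is commuted past a tensor factor contributed by the other; here the oddness $|s|=1$ is what matters, for a summand $r\otimes t$ of $s$ satisfies $|t|=1+|r|$, and it is this extra $+1$ that, combined with the structure-dependent signs of the $\Lin_{\rm M}$-composites and with the Koszul signs of the unordered tensor product, yields precisely the overall sign demanded by~(\ref{dnes_Etiopska_restaurace_1}) and its analogues. The symmetry $\tau(s)=s$, which lets $s$ be placed in whatever slot is convenient, keeps this bookkeeping manageable; no step beyond the sign count is conceptually difficult.
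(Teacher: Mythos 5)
Your proposal is correct, but it organizes the work differently from the paper, and the comparison is instructive. The paper proves \emph{both} halves of the proposition by direct computation: it expands axiom~(iv) for the operations~(\ref{eq:prvni_psani_v_Bonnu}) and shows they satisfy the $\Lin_{\rm S}$-form~(\ref{dnes_Etiopska_restaurace}), then repeats the expansion for~(\ref{psani_v_Bonnu}) --- this time ``without inserting Koszul signs,'' since in the standard interchange law the sign $(-1)^{|g_1||f_2|}$ vanishes when $f_2$ is an identity --- and shows they satisfy the $\Lin_{\rm M}$-form~(\ref{dnes_Etiopska_restaurace_1}); in both computations the decisive $-1$ is exactly the one you identify, from commuting the two degree-$1$ copies of $s$ past one another. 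You instead perform only the second verification and obtain the $\Lin_{\rm S}$ assertion formally, observing that the duality sign in~(\ref{dostane_Jarka_dovcu?}) makes the two pairs of definitions differ by precisely the signs of the modification~(\ref{posledni_den_konference}). This transport argument is sound and halves the computation, but note its cost: the statement that~(\ref{posledni_den_konference}) converts odd modular operads in $\Lin_{\rm S}$ into ones in $\Lin_{\rm M}$ is asserted in Section~\ref{Mikes_mi_popral_k_narozkam} without proof, so your argument delegates to it a routine axiom-by-axiom check that the paper's self-contained double computation does not need; conversely, the paper's redundancy serves its expository aim of exhibiting the discrepancy $(-1)^{|x|}$ versus $(-1)^{|z|}$ concretely in both conventions.

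One point you must make explicit in the direct verification, because it is the crux of the whole paper: the defining composites~(\ref{psani_v_Bonnu}) and~(\ref{pujdeme_dnes_k_Mechacum?}) are to be evaluated in the \emph{standard} monoidal structure, even though the target identities~(\ref{dnes_Etiopska_restaurace_1}) and~(\ref{za_tyden_do_Prahy_s_Germanwings}) are the $\Lin_{\rm M}$-forms of the axioms. Your phrase about ``the $\Lin_{\rm M}$-evaluation of $\twooo ab\otimes\id$'' correctly concerns only the derivation of the target identity; but if the defining composites themselves were expanded with the McGill convention, the conclusion would invert --- by the paper's closing remark, those operations would then form an odd modular operad in $\Lin_{\rm S}$, not in $\Lin_{\rm M}$ --- so leaving the convention implicit is exactly the kind of ambiguity this proposition warns against.
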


\begin{proof}
Let us show that the $\twooo ab$-operations defined 
by~(\ref{eq:prvni_psani_v_Bonnu})
satisfy~(\ref{dnes_Etiopska_restaurace}). In the following
calculations, $\bar s$ and $\bar{\bar s}$ are two copies of the map
$s : \bfk \to V \ot V$. One has
\begin{align*}
x\twooo ab (y \twooo cd z)& =
\big(\id_{S_1} \ot \bar s \ot \id_{S_2 \sqcup S_3}\big)^\dual\big(x \ot (y
                        \twooo cd z)\big)
\\
&= \big(\id_{S_1} \ot \bar s \ot \id_{S_2 \sqcup S_3}\big)^\dual\big(x
  \ot 
(\id_{S_2\sqcup \stt b} \ot \bar{\bar s} \ot \id_{S_3})^\dual
(y  \ot z)\big)
\\
&=
(-1)^{|x|}\big(\id_{S_1} \ot \bar s \ot \id_{S_2 \sqcup S_3}\big)^\dual
\big(\id_{S_1 \sqcup \{a,b\} \sqcup S_2} 
\ot \bar{\bar s} \ot \id_{S_3}\big)^\dual(x \ot y \ot z)
\end{align*}
while
\begin{align*}
\label{ne_citaet}
(x\twooo ab y) \twooo cd z 
&=
\big(\id_{S_1 \sqcup S_2} \ot \bar{\bar s} \ot \id_{S_3}\big)^\dual
\big((x \twooo ab y) \ot z\big)
\\
&=
\big(\id_{S_1 \sqcup S_2} \ot \bar{\bar s} \ot \id_{S_3}\big)^\dual
\big((\id_{S_1}\ot \bar s\ot \id_{S_2 \sqcup \stt c})^\dual(x \ot y) \ot z\big)
\\
&=
\big(\id_{S_1 \sqcup S_2} \ot \bar{\bar s} \ot \id_{S_3}\big)^\dual
\big(\id_{S_1} 
\ot \bar s \ot \id_{S_2  \sqcup \{c,d\} \sqcup S_3}\big)^\dual(x \ot y \ot z).
\end{align*}
To finish the proof of~(\ref{dnes_Etiopska_restaurace}), we observe that
\begin{align*}
\big(\id_{S_1} \ot {\bar s} \ot \id_{S_2} \ot \bar{\bar s} \ot
\id_{S_3}\big)^\dual  &=\big(\id_{S_1} \ot 
\bar s \ot \id_{S_2 \sqcup S_3}\big)^\dual
\big(\id_{S_1 \sqcup \{a,b\} \sqcup S_2} 
\ot \bar{\bar s} \ot \id_{S_3}\big)^\dual
\\
&=-\big(\id_{S_1 \sqcup S_2} \ot \bar{\bar s} \ot \id_{S_3}\big)^\dual
\big(\id_{S_1} 
\ot \bar s \ot \id_{S_2  \sqcup \{c,d\} \sqcup S_3}\big)^\dual,
\end{align*}
the minus sign coming from commuting $\bar s$ over   $ \bar{\bar s}$.

Let us also verify explicitly that the $\twooo ab$-operations defined 
by~(\ref{psani_v_Bonnu})
satisfy~(\ref{dnes_Etiopska_restaurace_1}). The related
calculation is of course obtained from the above one by removing duals and
inverting the order of compositions but, very crucially, 
{\em without\/} inserting Koszul signs. We obtain
\begin{align*}
x\twooo ab (y \twooo cd z)& =\big(x \ot (y
                        \twooo cd z)\big)
\big(\id_{S_1} \ot \bar s \ot \id_{S_2 \sqcup S_3}\big)
\\
&= \big(x
  \ot 
(y  \ot z)(\id_{S_2\sqcup \stt b} \ot \bar{\bar s} \ot \id_{S_3})
\big)
\big(\id_{S_1} \ot \bar s \ot \id_{S_2 \sqcup S_3}\big)
\\
&=(x \ot y \ot z)\big(\id_{S_1 \sqcup \{a,b\} \sqcup S_2} 
\ot \bar{\bar s} \ot \id_{S_3}\big)
\big(\id_{S_1} \ot \bar s \ot \id_{S_2 \sqcup S_3}\big)
\end{align*}
on one hand and
\begin{align*}
(x\twooo ab y) \twooo cd z 
&=
\big((x \twooo ab y) \ot z\big)
\big(\id_{S_1 \sqcup S_2} \ot \bar{\bar s} \ot \id_{S_3}\big)
\\
&=
\big((x \ot y)(\id_{S_1}\ot \bar s\ot \id_{S_2 \sqcup \stt c}) \ot
z\big)
\big(\id_{S_1 \sqcup S_2} \ot \bar{\bar s} \ot \id_{S_3}\big)
\\
&=(-1)^{|z|}(x \ot y \ot z)\big(\id_{S_1} 
\ot \bar s \ot \id_{S_2  \sqcup \{c,d\} \sqcup S_3}\big)
\big(\id_{S_1 \sqcup S_2} \ot \bar{\bar s} \ot \id_{S_3}\big)
\end{align*}
on the other.
Axiom~(\ref{dnes_Etiopska_restaurace_1}) now follows from the equality
\begin{align*}
\big(\id_{S_1} \ot {\bar s} \ot \id_{S_2} \ot \bar{\bar s} \ot
\id_{S_3}\big)
&=\big(\id_{S_1} 
\ot \bar s \ot \id_{S_2  \sqcup \{c,d\} \sqcup S_3}\big)
\big(\id_{S_1 \sqcup S_2} \ot \bar{\bar s} \ot \id_{S_3}\big)
\\
&=-\big(\id_{S_1 \sqcup \{a,b\} \sqcup S_2} 
\ot \bar{\bar s} \ot \id_{S_3}\big)\big(\id_{S_1} \ot 
\bar s \ot \id_{S_2 \sqcup S_3}\big).
\end{align*}

Notice that the sign difference between the results of the above two
computations is $(-1)^{|x|}$ versus $(-1)^{|z|}$ as it should be. The
verification of axioms~(\ref{za_tyden_do_Prahy})
resp.~(\ref{za_tyden_do_Prahy_s_Germanwings}) is similar. The
remaining axioms are not affected by the choice of the monoidal
structure in $\Lin$ so we will not verify them here.
\end{proof}

We evaluated~(\ref{psani_v_Bonnu}), (\ref{eq:prvni_psani_v_Bonnu}), 
(\ref{pujdeme_dnes_k_Mechacum?}) and~(\ref{66}) inside the standard
monoidal structure of $\Lin$. If we use the McGill one,
then~(\ref{psani_v_Bonnu})  
and~(\ref{pujdeme_dnes_k_Mechacum?}) would give an odd modular {\op} in
$\Lin_{\rm S}$ while~(\ref{eq:prvni_psani_v_Bonnu})
and~(\ref{66}) would lead to an odd modular {\op} in
$\Lin_{\rm M}$.

%\bibliography{b}
\def\cprime{$'$}\def\cprime{$'$}

\end{document}